\def\Mat{\text{M}}
\def\GL{\text{GL}}
\def\card{\#\,}
\newcommand{\Vect}{\operatorname{span}}
\newcommand{\tr}{\operatorname{tr}}
\newcommand{\rk}{\operatorname{rk}}
\newcommand{\Sp}{\operatorname{Sp}}
\newcommand{\codim}{\operatorname{codim}}
\renewcommand{\setminus}{\smallsetminus}
\def\F{\mathbb{F}}
\def\K{\mathbb{K}}
\def\R{\mathbb{R}}
\def\N{\mathbb{N}}
\def\calH{\mathcal{H}}
\def\calV{\mathcal{V}}
\def\lcro{\mathopen{[\![}}
\def\rcro{\mathclose{]\!]}}
\theoremstyle{definition}
\theoremstyle{plain}
\newtheorem{theo}{Theorem}
\newtheorem{prop}[theo]{Proposition}
\newtheorem{cor}[theo]{Corollary}
\theoremstyle{plain}
\theoremstyle{remark}
\newtheorem{Rems}{Remarks}
\newtheorem{Rem}[Rems]{Remark}
\title{On the matrices of given rank in a large subspace}
\author{Cl\'ement de Seguins Pazzis\footnote{Professor of Mathematics at Lyc\'ee Priv\'e Sainte-Genevi\`eve, 2, rue
de l'\'Ecole des Postes, 78029 Versailles Cedex, FRANCE.}
\footnote{e-mail address: dsp.prof@gmail.com}}
\begin{document}

\thispagestyle{plain}
\maketitle

\begin{abstract}
Let $V$ be a linear subspace of $\Mat_{n,p}(\K)$ with codimension lesser than $n$, where $\K$ is an arbitrary field
and $n \geq p$.
In a recent work of the author, it was proven that $V$ is always spanned by its
rank $p$ matrices unless $n=p=2$ and $\K \simeq \F_2$. Here, we give a sufficient condition on $\codim V$
for $V$ to be spanned by its rank $r$ matrices for a given $r \in \lcro 1,p-1\rcro$.
This involves a generalization of the Gerstenhaber theorem on linear subspaces of nilpotent matrices. 
\end{abstract}

\vskip 2mm
\noindent
\emph{AMS Classification:} 15A30

\vskip 2mm
\noindent
\emph{Keywords:} matrices, rank, linear combinations, dimension, codimension.

\section{Introduction}

In this paper, $\K$ denotes an arbitrary field, $n$ a positive integer and
$\Mat_n(\K)$ the algebra of square matrices of order $n$ with entries in $\K$.
For $(p,q)\in \N^2$, we also let $\Mat_{p,q}(\K)$ denote the vector space of matrices with $p$ rows, $q$
columns and entries in $\K$. Two linear subspaces $V$ and $W$ of $\Mat_{p,q}(\K)$ will be called equivalent when there are non-singular
matrices $P$ and $Q$ respectively in $\GL_p(\K)$ and $\GL_q(\K)$ such that $W=P\,V\,Q$.

\noindent In a recent work of the author \cite{dSPclass}, the following proposition was a major tool for generalizing
a theorem of Atkinson and Lloyd \cite{AtkLloyd} to an arbitrary field:

\begin{prop}\label{genrangmax}
Let $n$ and $p$ denote positive integers such that $n \geq p$.
Let $V$ be a linear subspace of $\Mat_{n,p}(\K)$ such that $\codim V<n$,
and assume $(n,p,\card \K) \neq (2,2,2)$ or $\codim V<n-1$. Then $V$ is spanned by its rank $p$ matrices.
\end{prop}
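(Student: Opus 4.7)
The overall plan is to set $W:=\Vect\{M\in V : \rk M=p\}$ and to show that every $M\in V$ lies in $W$. First, I would confirm that rank-$p$ matrices exist in $V$: by the classical Flanders/Atkinson--Lloyd bound, any linear subspace of $\Mat_{n,p}(\K)$ in which every matrix has rank at most $p-1$ has dimension at most $(p-1)n$; since $\codim V<n$ gives $\dim V>np-n=(p-1)n$, $V$ cannot consist entirely of rank-$<p$ matrices, so $W\neq\{0\}$.

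Next I would run the \emph{pencil argument}, which handles the case $\card\K>p$ directly. Fix $M\in V$ and any $N\in V$ of rank $p$. Choose a size-$p$ subset $I\subseteq\lcro 1,n\rcro$ such that $\det N_I\neq 0$. Then $t\mapsto \det\bigl((M+tN)_I\bigr)$ is a polynomial in $t$ of degree exactly $p$, with leading coefficient $\det N_I$, so it has at most $p$ roots in $\K$. Since $\card\K>p$, some $t_0\in\K$ makes $M+t_0N$ of rank $p$, and $M=(M+t_0N)-t_0N\in W$.

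The remaining case $\card\K\leq p$ is the main obstacle, for over a small field the polynomial above can vanish identically on $\K$: for instance $t^q-t$ vanishes on all of $\F_q$. I would then proceed by induction on $p$, with the base $p=1$ trivial (rank-$1$ matrices in $\Mat_{n,1}(\K)$ are just the nonzero vectors). For the inductive step, fix a rank-$p$ matrix $N\in V$, and via equivalence normalize $N$ to $\begin{pmatrix}I_p\\0\end{pmatrix}$; writing each $M\in V$ as $\begin{pmatrix}A(M)\\B(M)\end{pmatrix}$, I would extract an auxiliary subspace $V'\subseteq\Mat_{n-1,p-1}(\K)$ through a suitable projection or restriction, and verify that $\codim V'$ stays small enough to apply the inductive hypothesis. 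Rank-$(p-1)$ matrices of $V'$ could then be completed, using $N$ as a block, into rank-$p$ matrices of $V$ filling in the missing directions. The exceptional case $(n,p,\card\K)=(2,2,2)$ corresponds precisely to the boundary where the slack in $\codim V<n$ vanishes and the pencil trick is powerless; resolving it most likely requires either introducing several rank-$p$ matrices of $V$ simultaneously to parametrize a higher-dimensional pencil, or invoking a structural description of the near-maximal linear subspaces of $\Mat_{n,p}(\K)$ consisting of matrices of rank $<p$, refining the Flanders--Atkinson--Lloyd classification.
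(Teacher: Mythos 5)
Your first two steps are sound: the existence of a rank $p$ matrix in $V$ does follow from the Flanders-type dimension bound, and the pencil argument correctly shows that every $M\in V$ is a linear combination of rank $p$ matrices of $V$ whenever $\card \K>p$, since $t\mapsto \det\bigl((M+tN)_I\bigr)$ has degree exactly $p$ with leading coefficient $\det N_I\neq 0$ and therefore cannot vanish at every $t\in\K$. The problem is that the case $\card \K\leq p$ is where the entire difficulty of the proposition lies (note that the excluded case has $\card\K=p=2$), and there your argument is a plan rather than a proof. The inductive step is missing its essential content: you do not specify how the auxiliary subspace $V'\subseteq \Mat_{n-1,p-1}(\K)$ is defined, why its codimension is small enough for the induction hypothesis, and above all how a rank $p-1$ matrix of $V'$ is ``completed'' into a rank $p$ matrix of $V$. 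The elements of $V'$ would be images of elements of $V$ under some projection; the fibres are affine subspaces, and there is no a priori reason that some lift of a rank $p-1$ matrix of $V'$ has rank $p$ in $V$, nor that the resulting identities express an arbitrary $M\in V$ in terms of rank $p$ elements. You also explicitly defer the exceptional boundary case. As written, the proof is incomplete for every field with $\card\K\leq p$.

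For comparison, the paper sidesteps any case distinction on $\card\K$ by a duality trick: if $V$ were not spanned by its rank $p$ matrices, these would all lie in a linear hyperplane $H$ of $V$, and then for any $M_0\in V\setminus H$ the affine subspace $M_0+H$ would contain no rank $p$ matrix, would have codimension at most $n$ in $\Mat_{n,p}(\K)$, and would not be a linear subspace --- contradicting the affine extension of the Flanders theorem (Theorem \ref{FlandersdSP}), whose equality case is exactly where the $(2,2,2)$ exception enters. Completing your small-field induction would essentially amount to reproving that affine result, so the efficient route is to invoke it directly.
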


\noindent The exceptional case of $\Mat_2(\F_2)$ is easily described:

\begin{prop}\label{oddcase}
Let $V$ be a linear hyperplane of $\Mat_2(\F_2)$. Then:
\begin{itemize}
\item either $V$ is equivalent to $\frak{sl}_2(\F_2)=\bigl\{M \in M_2(\F_2) : \; \tr M=0\bigr\}$
and then $V$ is spanned by its rank $2$ matrices;
\item or $V$ is equivalent to the subspace $T_2^+(\F_2)$ of upper triangular matrices, and then
$V$ is not spanned by its rank $2$ matrices.
\end{itemize}
\end{prop}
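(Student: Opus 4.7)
I plan to reduce the problem to a classification of hyperplanes up to equivalence, then check each representative directly.

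The first step is to parametrize hyperplanes. The bilinear form $(A,M) \mapsto \tr(AM)$ on $\Mat_2(\F_2)$ is non-degenerate, so every linear hyperplane $V$ of $\Mat_2(\F_2)$ has the form $V = A^\perp := \{M \in \Mat_2(\F_2) : \tr(AM) = 0\}$ for a unique nonzero $A \in \Mat_2(\F_2)$. Next I would compute how the equivalence relation lifts to $A$: writing $N = PMQ$ and using cyclicity of the trace gives $\tr(A P^{-1} N Q^{-1}) = \tr(Q^{-1} A P^{-1} N)$, so that $P V Q = (Q^{-1} A P^{-1})^\perp$. Thus two hyperplanes $A^\perp$ and $(A')^\perp$ are equivalent in the sense of the paper if and only if $A$ and $A'$ lie in the same orbit under the left-right $\GL_2(\F_2) \times \GL_2(\F_2)$-action on $\Mat_2(\F_2) \setminus \{0\}$. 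That orbit is determined by rank, giving exactly two equivalence classes of hyperplanes.

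Choosing representatives, I take $A = I_2$ for the rank-$2$ orbit, so that $V = \{M : \tr M = 0\} = \mathfrak{sl}_2(\F_2)$. For the rank-$1$ orbit I pick $A = E_{1,2}$, whose orthogonal is $\{M : M_{2,1} = 0\} = T_2^+(\F_2)$; this settles the normal forms stated in the proposition.

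It then remains to check the two spanning statements. For $\mathfrak{sl}_2(\F_2)$, I would exhibit three rank-$2$ elements of trace zero that are linearly independent, namely $I_2$, $\bigl(\begin{smallmatrix}1&1\\0&1\end{smallmatrix}\bigr)$, and $\bigl(\begin{smallmatrix}1&0\\1&1\end{smallmatrix}\bigr)$; since $\dim \mathfrak{sl}_2(\F_2) = 3$, they span it. For $T_2^+(\F_2)$, the rank-$2$ matrices are exactly the invertible upper triangular ones, i.e.\ those with $a = d = 1$; there are only two such matrices over $\F_2$, so their span has dimension at most $2 < 3 = \dim T_2^+(\F_2)$, and the subspace is not spanned by its rank-$2$ elements. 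No serious obstacle is expected; the only subtle point is getting the action on the dual matrix $A$ right, i.e.\ not confusing the left and right multiplications in the trace identity.
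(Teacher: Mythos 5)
Your proposal is correct and follows essentially the same route as the paper: identify $V$ as the orthogonal of a single nonzero matrix for the trace form, split according to the rank of that matrix, and then verify the two cases with the same three rank-$2$ matrices spanning $\mathfrak{sl}_2(\F_2)$ and the same count of only two invertible matrices in $T_2^+(\F_2)$. The extra care you take in tracking how the equivalence action transports to the dual matrix is a welcome elaboration of a step the paper leaves implicit, but it is not a different method.
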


\begin{proof}
Consider the orthogonal $V^\bot$ of $V$ for the non-degenerate symmetric bilinear form $b : (A,B) \mapsto \tr(AB)$.
Then $V^\bot$ contains only one non-zero matrix $C$. Either $C$ has rank $2$, and it is equivalent to $I_2$, hence $V$
is equivalent to $\frak{sl}_2(\F_2)$; or $C$ has rank $1$, it is equivalent to $\begin{bmatrix}
0 & 1 \\
0 & 0
\end{bmatrix}$ hence $V$ is equivalent to $T_2^+(\F_2)$. In the first case, the three non-singular matrices
$I_2$, $\begin{bmatrix}
1 & 1 \\
0 & 1
\end{bmatrix}$ and $\begin{bmatrix}
1 & 0 \\
1 & 1
\end{bmatrix}$ span $\frak{sl}_2(\F_2)$. In the second one, $T_2^+(\F_2)$ has only two non-singular matrices,
which obviously cannot span it.
\end{proof}

\noindent Here, we wish to give a similar result for the rank $r$ matrices, still assuming that
$\codim V<n$. Our main results follow:

\begin{theo}\label{LCinf}
Let $n \geq p$ be integers and $V$ be a linear subspace of $\Mat_{n,p}(\K)$ with $\codim V<n$. \\
Let $r \in \lcro 1,p\rcro$ and $s \in \lcro 0,r\rcro$. Then every rank $s$ matrix of $V$
is a linear combination of rank $r$ matrices of $V$, unless $n=p=r=\# \K=2$ and $\codim V=1$.
\end{theo}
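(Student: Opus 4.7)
The case $s = r$ is trivial since $M$ is itself a rank $r$ matrix of $V$. Assume from now on that $s \le r - 1$. Using an equivalence $V \mapsto PVQ$, which preserves the codimension of $V$ and the rank stratification, I may assume $M = \begin{bmatrix} I_s & 0 \\ 0 & 0 \end{bmatrix}$, and I split the matrices of $\Mat_{n,p}(\K)$ correspondingly into four blocks of sizes $s \times s$, $s \times (p-s)$, $(n-s) \times s$ and $(n-s) \times (p-s)$.

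The strategy is to argue by contradiction: suppose that $M$ does not belong to the linear span $\calR \subseteq V$ of the rank $r$ matrices of $V$. Fix a hyperplane $W$ of $V$ with $\calR \subseteq W$ and $M \notin W$, and set $\calA := M + W$. Since $\calA \cap W = \emptyset$ and every rank $r$ matrix of $V$ lies in $\calR \subseteq W$, the affine subspace $\calA$ contains no rank $r$ matrix of $\Mat_{n,p}(\K)$; it satisfies $\codim_{\Mat_{n,p}(\K)} \calA = \codim V + 1 \le n$; and it contains the matrix $M$, whose rank is $s \le r - 1$.

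The theorem will then follow from a Gerstenhaber-type statement of the following shape, which is the paper's announced generalization: any affine subspace $\calB$ of $\Mat_{n,p}(\K)$ (with $n \ge p$) that contains a matrix of rank strictly less than $r$ and no matrix of rank exactly $r$ must satisfy $\codim \calB > n$, with the sole exception of the configuration arising from Proposition \ref{oddcase}, namely $n = p = r = \# \K = 2$ and $\codim \calB = n$. Applied to $\calB = \calA$, this yields the desired contradiction outside that exceptional case, which is precisely the one left open by the theorem.

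The main obstacle is the proof of this Gerstenhaber-type bound, which I would tackle by induction on $p$. The inductive step exploits Schur complements: for a perturbation $\begin{bmatrix} A' & B' \\ C' & D' \end{bmatrix}$ added to $M$, on the Zariski-open locus where $I_s + A'$ is invertible the total rank equals $s + \rk\bigl(D' - C'(I_s + A')^{-1} B'\bigr)$, so the rank $r$ avoidance on $\calB$ descends, generically, to a rank $(r-s)$ avoidance on an $(n-s) \times (p-s)$ matrix depending rationally on the perturbation. Combining this Schur-complement descent with Proposition \ref{genrangmax} for the base cases should yield the bound. The delicate point is the handling of small finite fields, especially $\# \K = 2$, where the generic/Zariski arguments weaken and where the isolated exception must emerge; this parallels the role of $T_2^+(\F_2)$ in Proposition \ref{oddcase}, and is the technical crux of the argument.
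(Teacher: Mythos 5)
There is a genuine gap: your argument reduces the theorem to a ``Gerstenhaber-type'' affine statement that you do not prove. The reduction itself (hyperplane $W$ containing the span of the rank $r$ matrices, affine subspace $\calA=M+W$ of codimension $\leq n$ avoiding rank $r$) is sound, but the resulting claim is essentially a reformulation of the theorem, so all the work remains. Your sketch of its proof by Schur complements is only a strategy: the map $(A',B',C',D')\mapsto D'-C'(I_s+A')^{-1}B'$ is not affine, the ``Zariski-open locus where $I_s+A'$ is invertible'' may be very thin (or awkward to exploit) over small finite fields, and you explicitly leave the $\card\K=2$ analysis --- which you yourself call the technical crux --- unresolved. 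Moreover, as literally stated your lemma is false: for $r=p$ the \emph{linear} subspace of matrices with first column zero has codimension exactly $n$, contains the zero matrix (rank $0<r$) and no rank $p$ matrix, so the conclusion $\codim\calB>n$ must be weakened to ``$\codim\calB>n$, or $\codim\calB=n$ and $\calB$ is linear'' (your $\calA$ is non-linear, so this would still suffice, but the statement needs that caveat).

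The paper's proof avoids all of this and is essentially two lines: since $\rk A=s\leq r$, after equivalence one may write $A=\begin{bmatrix} A_1 & 0\end{bmatrix}$ with $A_1\in\Mat_{n,r}(\K)$, i.e.\ the last $p-r$ columns of $A$ vanish. The space $W=\bigl\{M\in\Mat_{n,r}(\K):\begin{bmatrix} M & 0\end{bmatrix}\in V\bigr\}$ satisfies $\codim_{\Mat_{n,r}(\K)}W\leq\codim_{\Mat_{n,p}(\K)}V<n$ by the rank theorem, so Proposition \ref{genrangmax} (the already-established full-rank spanning result, applied to $n\times r$ matrices, where rank $r$ \emph{is} the full rank) shows $W$ is spanned by its rank $r$ matrices; writing $A_1$ as such a combination and padding with zero columns finishes the proof. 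In other words, no new affine or Gerstenhaber-type result is needed for Theorem \ref{LCinf} --- that machinery is reserved for Theorem \ref{condsuff}. You should either carry out your affine lemma in full (a substantial undertaking) or switch to the column-compression reduction.
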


\noindent This has the following easy corollary (which will be properly proven later on):

\begin{cor}\label{exist}
Let $n \geq p$ be integers and $V$ be a linear subspace of $\Mat_{n,p}(\K)$ with $\codim V<n$. \\
Then, for every $r \in \lcro 1,p\rcro$, the subspace $V$ contains a rank $r$ matrix.
\end{cor}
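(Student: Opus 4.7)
The plan is to derive Corollary~\ref{exist} directly from Theorem~\ref{LCinf}. The key observation is that it suffices to exhibit a single nonzero matrix $M \in V$ of rank at most $r$: setting $s := \rk M \in \lcro 1,r\rcro$, Theorem~\ref{LCinf} then expresses $M$ as a linear combination of rank $r$ matrices of $V$, and $M \neq 0$ forces at least one such rank $r$ matrix to actually belong to $V$.

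I would split the argument according to whether $r = p$ or $r < p$. When $r = p$, the assumption $\codim V < n \leq np$ already gives $V \neq \{0\}$, and every nonzero element of $V$ has rank in $\lcro 1,p \rcro = \lcro 1,r\rcro$. The exceptional case of Theorem~\ref{LCinf}, namely $n = p = r = 2$, $\# \K = 2$ and $\codim V = 1$, must be treated separately; but here Proposition~\ref{oddcase} classifies $V$ up to equivalence as $\frak{sl}_2(\F_2)$ or $T_2^+(\F_2)$, and each of these visibly contains $I_2$, which has rank $2$.

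When $r < p$, I would produce the desired $M$ by an elementary codimension count. Let $W \subset \Mat_{n,p}(\K)$ denote the subspace of matrices whose last $p - r$ columns vanish, so $\codim W = n(p - r)$. Then
\[
\codim(V \cap W) \leq \codim V + \codim W < n + n(p - r) = n(p - r + 1),
\]
which gives $\dim(V \cap W) > n(r - 1) \geq 0$. Hence $V \cap W$ contains a nonzero element $M$, and by construction every matrix in $W$ has rank at most $r$. Since $r < p$ precludes $(n,p,r) = (2,2,2)$, no exception of Theorem~\ref{LCinf} can arise, and the reduction from the first paragraph concludes.

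I do not anticipate any serious obstacle: Theorem~\ref{LCinf} does all of the substantive work, and the only point requiring care is the separate treatment of the exceptional hyperplane in $\Mat_2(\F_2)$ via Proposition~\ref{oddcase}. This is why the corollary is labelled ``easy'' in the introduction.
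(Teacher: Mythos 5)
Your proof is correct and follows essentially the same route as the paper: a codimension count produces a nonzero matrix of $V$ of rank at most $r$, and Theorem \ref{LCinf} then forces $V$ to contain a rank $r$ matrix. The paper's version is marginally more uniform --- it intersects $V$ with the $n$-dimensional space of matrices supported on the first column alone, obtaining a rank $1$ matrix that works simultaneously for every $r$, so no case split between $r=p$ and $r<p$ is needed.
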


\begin{theo}\label{condsuff}
Let $n \geq p$ be integers and $V$ be a linear subspace of $\Mat_{n,p}(\K)$ with $\codim V<n$. \\
Let $r \in \lcro 1,p-1\rcro$. If $\codim V \leq \binom{r+2}{2}-2$, then $V$ is spanned by its rank $r$ matrices.
\end{theo}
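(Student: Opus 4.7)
The plan is to argue by contradiction, combining Theorem \ref{LCinf} with a Gerstenhaber-type lower bound on the codimension of affine subspaces of high-rank matrices in $\Mat_{n,p}(\K)$.

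Suppose that the rank-$r$ matrices of $V$ span a proper subspace $V_r \subsetneq V$, and fix a linear hyperplane $H$ of $V$ containing $V_r$, together with some $M_1 \in V \setminus H$. By Theorem \ref{LCinf}---whose exceptional case requires $r = p = 2$ and hence does not arise here, since $r \leq p-1$---every matrix of $V$ of rank at most $r$ lies in $V_r$, and therefore in $H$. Consequently every matrix of $V \setminus H$ has rank at least $r+1$, and in particular the entire affine subspace $M_1+H \subset \Mat_{n,p}(\K)$ consists of matrices of rank $\geq r+1$. Its codimension in $\Mat_{n,p}(\K)$ equals $np-\dim H = \codim V + 1$.

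Next I would invoke the generalization of Gerstenhaber's theorem alluded to in the abstract, namely: any affine subspace of $\Mat_{n,p}(\K)$ whose matrices all have rank at least $r+1$ has codimension at least $\binom{r+2}{2}$ in $\Mat_{n,p}(\K)$. For $n = p$ and $r+1 = n$, the translation $N \mapsto I_n + N$ turns a nilpotent linear subspace of $\Mat_n(\K)$ into a unipotent affine subspace, so this statement specializes to Gerstenhaber's classical bound $\dim \leq \binom{n}{2}$; the bound $\binom{r+2}{2}$ is sharp in general, attained by the affine subspace of block matrices whose top-left $(r+1)\times(r+1)$ block has the form $I_{r+1} + N$ with $N$ strictly upper triangular, the three remaining blocks being arbitrary (since then the top-left block is unipotent, guaranteeing overall rank $\geq r+1$). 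Applied to $M_1+H$, this yields $\codim V + 1 \geq \binom{r+2}{2}$, contradicting the hypothesis $\codim V \leq \binom{r+2}{2} - 2$.

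The principal obstacle is establishing the generalized Gerstenhaber inequality over an arbitrary field. I would attack it by induction on $r$, first normalizing a chosen element of the affine subspace to the block form with an $(r+1) \times (r+1)$ identity in the top-left corner and then using a block decomposition of the direction space to peel off rows and columns, reducing, ultimately, to the classical Gerstenhaber theorem. The argument over small fields---most notably $\K = \F_2$, where the usual ``generic hyperplane'' constructions are unavailable---is likely to require additional bookkeeping and is where the bulk of the technical difficulty is expected to lie.
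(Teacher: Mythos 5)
Your overall architecture coincides exactly with the paper's: the hyperplane argument via Theorem \ref{LCinf}, the passage to the affine subspace $M_1+H$ of codimension $\codim V+1$ consisting of matrices of rank at least $r+1$, the key inequality that such an affine subspace must have codimension at least $\binom{r+2}{2}$ (the paper states this as $h(n,p,k)=np-\binom{k+1}{2}$ with $k=r+1$), and even the sharpness construction with a unipotent upper-left block. Your handling of the exceptional case of Theorem \ref{LCinf} (impossible since $r\leq p-1$) is also correct. The reduction you sketch for the key inequality --- downward induction to guarantee a rank $k$ element, normalization to $\begin{bmatrix} I_k & 0 \\ 0 & 0\end{bmatrix}$, restriction of the direction space to the upper-left $k\times k$ corner --- is the paper's reduction as well.

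The genuine gap is in where that reduction lands. After normalization one obtains a \emph{linear} subspace $W$ of $\Mat_k(\K)$ such that $I_k+W$ consists of invertible matrices; since $W$ is closed under scaling, this says only that no $M\in W$ has a nonzero eigenvalue \emph{in the field $\K$}, i.e.\ $\Sp(M)\subset\{0\}$. Over a non-algebraically-closed field such matrices need not be nilpotent (over $\R$, think of a rotation by a quarter turn), so the classical Gerstenhaber theorem does not apply, and the statement you actually need --- $\dim W\leq\binom{k}{2}$ whenever every element of $W$ has no nonzero eigenvalue in $\K$ --- is strictly stronger. You have also misdiagnosed where the difficulty sits: it is not a matter of extra bookkeeping over small fields such as $\F_2$, but of proving this strengthened Gerstenhaber bound over \emph{every} non-algebraically-closed field (including $\R$ and $\Q$; for $\R$ there is a shortcut via symmetric matrices, due to Meshulam, but not in general). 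This strengthening is the paper's main new technical contribution: it is proved by showing (Proposition \ref{combin}) that some row-space $R_i(V)$ must vanish --- via a combinatorial argument producing, if not, a cyclic sum of elementary matrices with eigenvalue $1$ --- followed by an induction peeling off one row and column at a time. Without this ingredient, or an equivalent substitute, your proof is incomplete for general $\K$.
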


\noindent Notice that this has the following nice corollary (for which a much more elementary proof exists):

\begin{cor}\label{corhyper}
Let $H$ be a linear hyperplane of $\Mat_n(\K)$, with $n \geq 2$. Then $H$ is spanned by its rank $r$
matrices, for every $r \in \lcro 1,n\rcro$, unless $(n,r,\# \K)=(2,2,2)$.
\end{cor}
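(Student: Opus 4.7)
The plan is to read Corollary \ref{corhyper} as a direct consequence of the two main results of the paper, splitting the range $\lcro 1,n\rcro$ for $r$ at the natural boundary $r=n$ (where one must leave a rank $r$ matrix invertible) versus $r<n$. Set $V:=H$ and $p:=n$, so that the hypothesis $\codim V<n$ becomes $1<n$, which holds since $n\geq 2$.

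For $r\in\lcro 1,n-1\rcro$, I apply Theorem \ref{condsuff}. The numerical condition to check is $\codim H=1\leq \binom{r+2}{2}-2$, i.e.\ $\binom{r+2}{2}\geq 3$, and this holds already for $r=1$ since $\binom{3}{2}=3$. No exception is attached to Theorem \ref{condsuff}, so this single line settles every $r$ strictly below $n$, with no restriction on $\K$.

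For the remaining value $r=n$, I invoke Proposition \ref{genrangmax} (still with $p=n$ and $V=H$). The escape clause reads ``$(n,p,\card\K)\neq(2,2,2)$ or $\codim V<n-1$'', whose failure means $n=p=2$, $\K\simeq\F_2$, and $\codim V\geq n-1=1$; since $\codim H=1$, this failure condition collapses exactly to $(n,\#\K)=(2,2)$, matching the exceptional triple $(n,r,\#\K)=(2,2,2)$ in the corollary. Outside that triple, Proposition \ref{genrangmax} yields that $H$ is spanned by its rank $n$ matrices; inside it, Proposition \ref{oddcase} even shows that the conclusion genuinely fails (take $H$ equivalent to $T_2^+(\F_2)$), so the excluded case is sharp.

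There is essentially no obstacle in this argument: both hypotheses are checked by a single inequality. The ``more elementary proof'' hinted at by the author would bypass the Gerstenhaber-type machinery behind Theorem \ref{condsuff} by writing $H=\{M : \tr(AM)=0\}$ for some nonzero $A$, reducing $A$ by equivalence to a diagonal block form $\mathrm{diag}(I_s,0)$, and exhibiting explicit rank $r$ matrices in $H$ that span it; the main difficulty in that self-contained route is the case analysis on the pair $(s,r)$ and the handling of the $\F_2$-obstruction by hand, all of which Theorem \ref{condsuff} and Proposition \ref{genrangmax} package away.
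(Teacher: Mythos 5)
Your proposal is correct and follows exactly the route the paper intends: Theorem \ref{condsuff} handles $r\in\lcro 1,n-1\rcro$ via the inequality $1\leq\binom{r+2}{2}-2$, and Proposition \ref{genrangmax} (with the $(2,2,2)$ escape clause, sharp by Proposition \ref{oddcase}) handles $r=n$. Nothing to add.
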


We will also show that there exists a linear subspace of $\Mat_{n,p}(\K)$
with codimension $\binom{r+2}{2}-1$ which is not spanned by its rank $r$ matrices, hence the upper bound
$\binom{r+2}{2}-2$ in the above theorem is tight.
The proof of these results will involve an extension of the Flanders theorem to affine subspaces (see Section 3 of \cite{dSPaffpres})
and a slight generalization of the famous Gerstenhaber theorem \cite{Gerstenhaber} on linear subspaces of nilpotent matrices.

\section{Proving the main theorems}\label{proofs}

\subsection{Proof of Proposition \ref{genrangmax}}

For the sake of completeness, we will recall here the proof of Proposition \ref{genrangmax}, already featured in \cite{dSPclass}.
This is based on the following theorem of the author, slightly generalizing earlier works of Dieudonn\'e
\cite{Dieudonne}, Flanders \cite{Flanders} and Meshulam \cite{Meshulam}:

\begin{theo}\label{FlandersdSP}
Given positive integers $n \geq p$, let $\calV$ be an affine subspace of $\Mat_{n,p}(\K)$
containing no rank $p$ matrix.
Then $\codim \calV \geq n$. \\
If in addition $\codim \calV=n$ and $(n,p,\card \K) \neq (2,2,2)$, then
$\calV$ is a linear subspace of $\Mat_{n,p}(\K)$.
\end{theo}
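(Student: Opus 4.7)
The plan is to reduce the affine Flanders problem to the classical linear Flanders theorem (via the cited works of Dieudonn\'e, Flanders, and Meshulam), with the main difficulty concentrated in small finite fields.

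Fix $A \in \calV$ and write $\calV = A + V$, with $V$ the linear direction, so that $\codim \calV = \codim V$. If $V$ itself contains no rank $p$ matrix, the classical Flanders bound applied to $V$ gives $\dim V \leq n(p-1)$, hence $\codim V \geq n$, and the bound is proved. Otherwise, pick a rank $p$ matrix $N \in V$. Up to multiplying on the left and right by invertible matrices (which preserves both ranks and $\codim$), we may assume $N = \binom{I_p}{0}$ and decompose $A = \binom{X}{Y}$ with $X \in \Mat_p(\K)$ and $Y \in \Mat_{n-p,p}(\K)$. The hypothesis that $A + tN \in \calV$ has rank less than $p$ for every $t \in \K$ rewrites as $\ker(X + tI_p) \cap \ker Y \neq \{0\}$ for all $t$. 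Since $X + tI_p$ is singular only at the at most $p$ values $t = -\lambda$ with $\lambda$ an eigenvalue of $X$, this already yields a contradiction whenever $|\K| > p$.

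The key obstacle is the small-field regime $|\K| \leq p$, where a single rank $p$ direction $N$ is no longer enough. Here the plan is to vary $N$ over the full rank $p$ stratum of $V$ and to exploit $\codim V < n$ via repeated applications of the classical Flanders bound to auxiliary subspaces built from $V$ (column-kernel restrictions, projections killing rows, quotients by rank $p-1$ layers, and so on) to force some $N' \in V$ of rank $p$ and some $t \in \K$ for which $A + tN'$ actually attains rank $p$, contradicting the hypothesis on $\calV$. This is where the excluded triple $(2,2,2)$ shows up as the unique obstruction in the first place: it is precisely the configuration in which $n = p$, $|\K| = p$, and no extra room is available.

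For the linearity statement, assume $\codim \calV = n$ and $(n,p,\#\K) \neq (2,2,2)$. The argument above forces $V$ to contain no rank $p$ matrix, so $V$ saturates the classical Flanders bound $\dim V = n(p-1)$. The equality case of classical Flanders pins $V$ down up to equivalence as an extremal subspace, typically $\{M \in \Mat_{n,p}(\K) : Mw = 0\}$ for some nonzero $w \in \K^p$, with a short additional list of shapes over very small fields. Direct inspection of each extremal $V$ then shows that $A + V$ can avoid rank $p$ matrices only when $A \in V$, forcing $\calV = V$ to be linear; the excluded triple $(2,2,2)$ is precisely the single configuration in which this inspection yields a genuine non-linear counterexample.
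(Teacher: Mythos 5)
First, a point of reference: the paper does not prove Theorem~\ref{FlandersdSP} at all --- it is imported wholesale from Section~3 of \cite{dSPaffpres}, so there is no in-paper argument to compare yours against. Judged on its own, your proposal has a genuine gap at exactly the point where the theorem goes beyond the classical results. Your reduction for $\card\K>p$ is fine: if the direction space $V$ contains a rank $p$ matrix $N$, normalizing $N=\begin{bmatrix} I_p \\ 0\end{bmatrix}$ and noting that $\det(X+tI_p)$ has at most $p$ roots produces a rank $p$ element of $\calV$; hence $V$ has no rank $p$ matrix and the classical Flanders bound gives $\codim\calV=\codim V\geq n$. But for $\card\K\leq p$ you offer only a ``plan'' (vary $N'$ over the rank $p$ stratum, apply Flanders to unspecified auxiliary subspaces) with no actual construction and no argument that it terminates in a rank $p$ element of $\calV$. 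That small-field regime is precisely the content of the generalization --- Dieudonn\'e and Flanders already cover large fields, and Meshulam's contribution was the bound for arbitrary fields in the \emph{linear} case --- so the hard part of the theorem is not proved. Your remark that the triple $(2,2,2)$ is ``the obstruction in the first place'' is also misplaced: the inequality $\codim\calV\geq n$ holds for $(2,2,2)$ as well; that triple is excluded only in the equality/linearity statement.

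The linearity part has the same structural problem. Your deduction that $V$ contains no rank $p$ matrix when $\codim\calV=n$ rests on the first part, which you have only established for $\card\K>p$. You then invoke the classification of the equality case of Flanders' theorem over an arbitrary field, including ``a short additional list of shapes over very small fields,'' and claim a ``direct inspection'' of each. The inspection does work for the generic extremal space $\{M:\; Mw=0\}$ (and its image-side analogue when $n=p$): if $Aw\neq 0$ one can choose $M$ so that $(A+M)$ maps a complement of $\K w$ isomorphically onto a hyperplane of $\im(A+M)$ avoiding $Aw$, producing a rank $p$ matrix in $A+V$. But the equality-case classification over arbitrary (in particular, very small) fields is itself a nontrivial theorem --- essentially the subject of \cite{AtkLloyd} and \cite{dSPclass} --- and the exceptional shapes are never listed or inspected, so the case that actually produces the $(2,2,2)$ counterexample is exactly the one left unexamined. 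As it stands the proposal is a correct proof only for $\card\K>p$ and a research programme for the rest.
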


\begin{proof}[Proof of Proposition \ref{genrangmax}]
Assume that $V$ is not spanned by its rank $p$ matrices.
Then there would be a linear hyperplane $H$ of $V$ containing every rank $p$ matrix of $V$.
Choosing $M_0 \in V \setminus H$, it would follow that the affine
subspace $M_0+H$, which has codimension in $\Mat_{n,p}(\K)$ lesser than or equal to $n$, contains no rank $p$ matrix.
However $M_0+H$ is not a linear subspace of $\Mat_{n,p}(\K)$, which contradicts the above theorem.
\end{proof}

\subsection{Proof of Theorem \ref{LCinf} and Corollary \ref{exist}}

We discard the case $n=p=r=\card \K=2$ and $\codim V=1$, which has already been studied in the proof of Proposition \ref{oddcase}.

Let us now prove Theorem \ref{LCinf}. Let $A$ be a rank $s$ matrix of $V$.
Replacing $V$ with an equivalent subspace, we lose no generality assuming that $A$ has the form
$$A=\begin{bmatrix}
A_1 & 0
\end{bmatrix} \quad \text{for some $A_1 \in \Mat_{n,r}(\K)$.}$$
Denote by $W$ the linear space consisting of those matrices $M \in \Mat_{n,r}(\K)$ such that
$\begin{bmatrix}
M & 0
\end{bmatrix} \in V$.
Then the rank theorem shows that $\codim_{\Mat_{n,r}(\K)} W \leq \codim_{\Mat_{n,p}(\K)} V <n$.
Notice that the situation $n=r=2$ may not arise, hence Proposition \ref{genrangmax}
shows that $W$ is spanned by its rank $r$ matrices. In particular, the matrix $A_1$ is a linear combination of rank $r$
matrices of $W$, hence $A$ is a linear combination of rank $r$ matrices of $V$. This proves Theorem \ref{LCinf}.
\vskip 2mm
\noindent Let us now turn to Corollary \ref{exist}.
Denote by $V'$ the linear subspace of $V$
consisting of its matrices with all columns zero
starting from the second one. Then $\dim V'\geq n-\codim V>0$, hence $V'\neq \{0\}$, which proves that
$V$ contains a rank $1$ matrix $M$. Then, for every $r \in \lcro 1,p\rcro$,
Theorem \ref{LCinf} shows that $M$ is a linear combination of rank $r$ matrices of $V$,
hence $V$ must contain at least one rank $r$ matrix!

\subsection{Proof of Theorem \ref{condsuff}}

We will start from an observation that is similar to the one that lead to Proposition \ref{genrangmax}.
Let $V$ be a linear subspace of $\Mat_{n,p}(\K)$, let $r \in \lcro 1,p-1\rcro$ and assume that
$V$ is not spanned by its rank $r$ matrices. Then there would be a linear hyperplane $H$ of $V$
containing every rank $r$ matrix of $V$. By Theorem \ref{LCinf},
the subspace $H$ must also contain every matrix of $V$ with rank lesser than or equal to $r$. Choosing arbitrarily $M_0 \in V \setminus H$,
it would follow that the (non-linear) affine subspace $M_0+H$ contains only matrices of rank greater than $r$ and has dimension $\dim V-1$.

Conversely, assume there exists an affine subspace $\calH$ of $\Mat_{n,p}(\K)$ which contains only matrices of rank greater than $r$
(notice then that $0 \not\in \calH$),
and let $H$ denote its translation vector space. Then $H$ must contain every rank $r$ matrix of the linear space $V':=\Vect \calH$,
therefore $V'$, which has dimension $\dim H+1$, is not spanned by its rank $r$ matrices.

Theorem \ref{condsuff} will thus come from the following result (applied to $k=r+1$),
which generalizes a theorem of Meshulam \cite{Meshulam2} to an arbitrary field and rectangular matrices
(Meshulam tackled the case of an algebraically closed field and the one of $\R$, and he restricted his study to square matrices).

\begin{theo}
Let $n \geq p \geq k$ be positive integers. Denote by
$h(n,p,k)$ the largest dimension for an affine subspace $\calV$ of $\Mat_{n,p}(\K)$
satisfying
$$\forall M \in \calV, \quad \rk M \geq k.$$
Then
$$h(n,p,k)=np-\binom{k+1}{2}\cdot$$
\end{theo}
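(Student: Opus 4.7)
The plan is to establish both inequalities $h(n,p,k) \geq np - \binom{k+1}{2}$ and $h(n,p,k) \leq np - \binom{k+1}{2}$. For the lower bound, the approach is to exhibit the affine subspace $\calV_0$ of $\Mat_{n,p}(\K)$ consisting of the matrices whose top-left $k \times k$ block is upper triangular with $1$'s on the diagonal and whose remaining blocks are arbitrary. Every such matrix has rank at least $k$, since the top-left block is invertible, and a direct count gives
\[
\dim \calV_0 = \binom{k}{2} + k(p-k) + (n-k)k + (n-k)(p-k) = np - \binom{k+1}{2}.
\]

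For the upper bound I would avoid induction on $k$ and proceed through a single reduction. Starting from an affine subspace $\calV$ of matrices of rank at least $k$, I pick $M_0 \in \calV$ of minimum rank $k_0 \geq k$ and, using equivalence, normalize it to $M_0 = \begin{bmatrix} I_{k_0} & 0 \\ 0 & 0 \end{bmatrix}$. Setting $H := \calV - M_0$ and writing each $X \in H$ in block form $\begin{bmatrix} X_1 & X_2 \\ X_3 & X_4 \end{bmatrix}$ compatible with this partition, the next step is to consider the linear map $\rho \colon H \to \Mat_{k_0, p-k_0}(\K) \oplus \Mat_{n-k_0, k_0}(\K) \oplus \Mat_{n-k_0, p-k_0}(\K)$ defined by $\rho(X) = (X_2, X_3, X_4)$. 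Then $\ker \rho$ identifies, via $X \mapsto X_1$, with a linear subspace $\widetilde H$ of $\Mat_{k_0}(\K)$.

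The key observation will be that every $Y \in \widetilde H$ has no nonzero eigenvalue in $\K$: the associated $X = \begin{bmatrix} Y & 0 \\ 0 & 0 \end{bmatrix}$ lies in $H$, so for every $t \in \K$ the matrix $M_0 + tX = \begin{bmatrix} I_{k_0} + tY & 0 \\ 0 & 0 \end{bmatrix}$ belongs to $\calV$, and the minimality of $k_0$ forces $\rk(I_{k_0} + tY) \geq k_0$, that is, $I_{k_0} + tY$ is invertible. The generalization of the Gerstenhaber theorem announced in the introduction then yields $\dim \widetilde H \leq \binom{k_0}{2}$, and rank-nullity applied to $\rho$ gives
\[
\dim H \leq \binom{k_0}{2} + \bigl(np - k_0^2\bigr) = np - \binom{k_0 + 1}{2} \leq np - \binom{k+1}{2},
\]
since $k_0 \geq k$.

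The main obstacle will be the extension of Gerstenhaber's theorem itself: over an algebraically closed field the condition ``no nonzero eigenvalue in $\K$'' is equivalent to nilpotency and the classical argument of Gerstenhaber applies directly, whereas over an arbitrary field the condition is strictly weaker and the standard proof needs to be refined. Everything else in the argument is elementary linear algebra---choice of a minimum-rank representative, normalization by equivalence, and rank-nullity for $\rho$---so the Gerstenhaber-type input is the technical heart of the proof.
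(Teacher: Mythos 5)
Your proof is correct and follows essentially the same route as the paper: the same triangular-block construction for the lower bound, and for the upper bound the same block decomposition around a normalized minimum-rank element, reducing the kernel of the projection onto the off-diagonal blocks to a subspace of $\Mat_{k_0}(\K)$ whose elements have no nonzero eigenvalue in $\K$, to which the generalized Gerstenhaber bound $\binom{k_0}{2}$ is applied --- your choice of a minimum-rank $M_0$ is just a clean repackaging of the paper's downward induction on $k$. The generalized Gerstenhaber theorem you defer is likewise stated and proven as a separate theorem in the paper (by induction on $n$, using a combinatorial cycle argument with elementary matrices to find a row index $i$ with $R_i(V)=\{0\}$), so your reduction matches the paper's structure exactly.
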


Inequality $h(n,p,k) \geq np-\binom{k+1}{2}$ is obtained as in \cite{Meshulam2} by considering the
affine subspace
$\calH$ consisting of all $n \times p$ matrices of the form
$$\begin{bmatrix}
I_r +T & ? \\
? & ? \\
\end{bmatrix} \quad \text{with $T \in T_k^{++}(\K)$,}$$
where $T_k^{++}(\K)$ denotes the set of strictly upper triangular matrices of $\Mat_k(\K)$. \\
Obviously $\codim_{\Mat_{n,p}(\K)} \calH=\codim_{\Mat_k(\K)} T_k^{++}(\K)=\binom{k+1}{2}$,
whilst, judging from its left upper block, every matrix of $\calH$ has a rank greater than or equal to $k$.

\vskip 2mm
In order to prove that $h(n,p,k) \leq np-\binom{k+1}{2}$, we let $\calV$ be an arbitrary affine subspace of
$\Mat_{n,p}(\K)$ such that $\forall M \in \calV, \; \rk M \geq k$, and we prove that $\dim \calV \leq np-\binom{k+1}{2}\cdot$
Proceeding by downward induction on $k$, we may assume furthermore that $\calV$ contains a rank $k$ matrix.
We then lose no generality assuming that $\calV$ contains the matrix
$J_k:=\begin{bmatrix}
I_k & 0 \\
0 & 0
\end{bmatrix}$. Denote by $V$ the translation vector space of $\calV$ and consider the linear
subspace $W$ of $\Mat_k(\K)$ consisting of those matrices $A$ for which
$\begin{bmatrix}
A & 0 \\
0 & 0
\end{bmatrix}$ belongs to $V$. Then the rank theorem shows $\codim_{\Mat_{n,p}(\K)} V \geq \codim_{\Mat_k(\K)} W$.
The assumptions on $\calV$ show that $I_k+W$ contains only non-singular matrices.
Since $W$ is a linear subspace, this shows that for any $M \in W$, the only possible eigenvalue of $M$ in the field $\K$ is $0$.
The proof will thus be finished should we establish the next theorem:

\vskip 2mm
\noindent For $M \in \Mat_n(\K)$, we let $\Sp(M)$ denote the set of its eigenvalues \emph{in the field} $\K$.

\begin{theo}[Generalized Gerstenhaber theorem]\label{Gersten}
Let $V$ be a linear subspace of $\Mat_n(\K)$ such that $\Sp(M) \subset \{0\}$ for every $M \in V$.
Then $\dim V \leq \binom{n}{2}\cdot$
\end{theo}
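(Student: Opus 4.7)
I would proceed by induction on $n$. The base case $n=1$ is immediate: $\Sp(M)=\{M\}\subset\{0\}$ forces $M=0$, hence $V=\{0\}$ and $\dim V = 0 = \binom{1}{2}$. For the inductive step, let $V\subset\Mat_n(\K)$ with $n\geq 2$ satisfy the hypothesis. The key elementary observation I would use is that, for every nonzero $v\in\K^n$ and every $M\in V$, the relation $Mv=\lambda v$ with $\lambda\in\K$ forces $\lambda=0$, since otherwise $\lambda$ would be a $\K$-eigenvalue of $M$.

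The natural strategy is a dichotomy on reducibility. In the \emph{reducible case}, where $V$ admits a common invariant subspace $W\subset\K^n$ with $\dim W=k$ satisfying $0<k<n$, a basis adapted to $W$ puts every $M\in V$ in block upper triangular form $M=\begin{pmatrix} A & B \\ 0 & D \end{pmatrix}$ with $A\in\Mat_k(\K)$ and $D\in\Mat_{n-k}(\K)$. Because $\chi_M=\chi_A\cdot\chi_D$, the condition $\Sp(M)\subset\{0\}$ transfers to each diagonal block, so the projections $\pi_A(V)\subset\Mat_k(\K)$ and $\pi_D(V)\subset\Mat_{n-k}(\K)$ again satisfy the hypothesis of the theorem in lower dimensions. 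The inductive hypothesis bounds their dimensions by $\binom{k}{2}$ and $\binom{n-k}{2}$, while the off-diagonal $B$-block contributes at most $k(n-k)$ extra dimensions. Summing:
\[
\dim V \leq \binom{k}{2}+k(n-k)+\binom{n-k}{2}=\binom{n}{2}.
\]

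The \emph{irreducible case}, in which $V$ has no proper nonzero common invariant subspace of $\K^n$, is the main obstacle. This case is genuine, as illustrated by $V=\R\cdot J\subset\Mat_2(\R)$ with $J=\begin{pmatrix} 0 & -1 \\ 1 & 0 \end{pmatrix}$: this $V$ acts irreducibly on $\R^2$, has $\Sp(M)\subset\{0\}$ for every $M\in V$, and attains $\dim V=1=\binom{2}{2}$. Under irreducibility, the Jacobson density theorem identifies the $\K$-algebra generated by $V$ with $\operatorname{End}_D(\K^n)$, where $D$ is the finite-dimensional division $\K$-algebra centralizing $V$ (Schur's lemma). I would view $\K^n$ as a $D$-vector space of dimension $n/\dim_\K D$ and pursue a Gerstenhaber-type reduction over $D$, exploiting the additional constraint that $V$ contains no nonzero scalar matrix (since $\Sp(\lambda I)=\{\lambda\}\not\subset\{0\}$ for $\lambda\in\K^*$). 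The reason the classical Gerstenhaber proof does not transfer verbatim is that Engel's theorem, which supplies a common kernel vector for any subspace of nilpotent matrices, has no direct counterpart under our weaker spectrum hypothesis; constructing instead a $V$-invariant $D$-flag in $\K^n$, and then recovering the bound $\binom{n}{2}$ via multiplicativity of $\K$-dimensions, is the delicate step I expect to require the most care.
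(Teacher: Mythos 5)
Your base case and reducible case are fine: the two diagonal blocks inherit the spectral hypothesis because $\chi_M=\chi_A\chi_D$, and the identity $\binom{k}{2}+k(n-k)+\binom{n-k}{2}=\binom{n}{2}$ closes that branch. The genuine gap is the irreducible case, which you yourself identify as the main obstacle: it is announced as a plan, not proved, and the plan as stated cannot work. If $V$ acts irreducibly on $\K^n$, then any $V$-invariant $D$-subspace is in particular a $V$-invariant $\K$-subspace, hence $\{0\}$ or $\K^n$; so there is no nontrivial ``$V$-invariant $D$-flag'' to construct. Moreover, Jacobson density and Schur's lemma describe the associative algebra generated by $V$ (all of $\operatorname{End}_D(\K^n)$, which certainly contains elements with nonzero eigenvalues), whereas the hypothesis $\Sp(M)\subset\{0\}$ constrains only the linear span $V$ and is destroyed by taking products. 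Your own example $V=\R J\subset \Mat_2(\R)$ shows the irreducible case is nonvacuous and that the bound $\binom{n}{2}$ can be attained there, so the proof is incomplete precisely at its hardest point.

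For comparison, the paper sidesteps the reducible/irreducible dichotomy entirely. It first proves a combinatorial lemma (Proposition \ref{combin}): there is an index $i$ such that the only matrix of $V$ whose rows other than the $i$-th all vanish is $0$. The argument is a cycle trick: if every $R_i(V)$ were nonzero, an induction on $n$ would place in $V$ an elementary matrix $E_{f(k),k}$ for each $k$, and a cycle of $f$ would produce a matrix of $V$ with eigenvalue $1$. Taking $i=n$ after conjugating by a permutation, one bounds $\dim V$ by $(n-1)$ plus the dimension of the subspace $W$ of matrices of $V$ whose last column vanishes above the $(n,n)$ entry; for $M\in W$ that diagonal entry is an eigenvalue, hence zero, the upper-left $(n-1)\times(n-1)$ block inherits the hypothesis, and $R_n(V)=\{0\}$ makes $M\mapsto A(M)$ injective on $W$, so induction gives $\dim V\leq (n-1)+\binom{n-1}{2}=\binom{n}{2}$. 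To salvage your route you would need an actual argument in the irreducible case; the paper's row-elimination lemma is one way to avoid ever needing one.
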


Note that this implies the Gerstenhaber theorem on linear subspaces of nilpotent matrices
\cite{Brualdi,Gerstenhaber,Mathes}, and that this is equivalent to it when $\K$ is algebraically closed.
Moreover, for $\K=\R$, the proof is easy by intersecting $V$ with the space of symmetric matrices of $\Mat_n(\K)$
(see \cite{Meshulam2}).
Our proof for an arbitrary field will use a brand new method. For $i \in \lcro 1,n\rcro$ and $M \in \Mat_n(\K)$, we let $L_i(M)$
denote the $i$-th row of $M$. We set
$$R_i(V):=\bigl\{M \in V : \; \forall j \in \lcro 1,n\rcro \setminus \{i\}, \; L_j(M)=0\bigr\}.$$

\begin{prop}\label{combin}
Let $V$ be a linear subspace of $\Mat_n(\K)$ such that $\Sp(M) \subset \{0\}$ for every $M \in V$.
Then $R_i(V)=\{0\}$ for some $i \in \lcro 1,n\rcro$.
\end{prop}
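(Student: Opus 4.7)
The approach is a contradiction argument. Assume $R_i(V) \neq \{0\}$ for every $i \in \lcro 1,n\rcro$, and for each $i$ choose a non-zero $M_i \in R_i(V)$. Writing $v_i^T := L_i(M_i)$, one has $M_i = e_i v_i^T$ with $v_i \neq 0$; from $M_i^2 = (v_i)_i\, M_i$ we read off that $(v_i)_i$ is an eigenvalue of $M_i$, hence $(v_i)_i = 0$ by the hypothesis on $V$. Let $A \in \Mat_n(\K)$ be the matrix whose $i$-th row is $v_i^T$: then $A$ has zero diagonal and no zero row. For every diagonal $D = \mathrm{diag}(\lambda_1,\ldots,\lambda_n)$, the matrix $DA = \sum_i \lambda_i M_i$ lies in $V$, so $\Sp(DA) \subset \{0\}$.

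I would obtain the contradiction by producing a diagonal $D$ for which $DA$ has a non-zero eigenvalue in $\K$. Replacing $D$ by $D/\mu$ reduces this to finding a zero in $\K^n$ of the polynomial
\[
P(\lambda_1,\ldots,\lambda_n) := \det(I_n - DA) = \sum_{S \subseteq \lcro 1,n\rcro} (-1)^{|S|}\Bigl(\prod_{i\in S}\lambda_i\Bigr)\det(A[S]),
\]
where $A[S]$ denotes the principal submatrix on $S$. Since row $i$ of $DA$ equals $\lambda_i v_i^T$, the polynomial $P$ is multilinear. A short induction on the number of variables (isolating the last variable, and using the companion fact that a non-zero multilinear polynomial takes non-zero values somewhere) shows that any non-constant multilinear polynomial over an arbitrary field has a zero. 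Thus it suffices to prove that $P$ is non-constant, equivalently that some $\det(A[S])$ with $|S|\geq 2$ is non-zero (the size-one minors vanish by zero-diagonality).

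To exhibit such a non-zero minor --- the main technical step --- I would introduce the support digraph $G$ of $A$, whose arrows are the pairs $(i,j)$ with $A_{ij} \neq 0$. By hypothesis $G$ has no loop and every vertex has out-degree at least one, so $G$ contains a directed cycle. Pick one of minimal length, say $i_1 \to \cdots \to i_k \to i_1$ with $k \geq 2$. The key observation is that this cycle has no chord in $G$: any extra arrow $i_a \to i_b$ (with $b \not\equiv a+1 \pmod k$) would combine with the arc $i_b \to i_{b+1} \to \cdots \to i_a$ of the original cycle to produce a strictly shorter one. Consequently, on $S := \{i_1,\ldots,i_k\}$ the submatrix $A[S]$ has non-zero entries only at the positions $(i_\ell, i_{\ell+1})$, and the Leibniz expansion of $\det(A[S])$ reduces to the single non-zero term $\pm \prod_{\ell} A_{i_\ell, i_{\ell+1}}$. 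This yields a non-zero principal minor of size $k \geq 2$, makes $P$ non-constant, and thereby contradicts $\Sp(DA) \subset \{0\}$ for all $D$.
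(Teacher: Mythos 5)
Your proof is correct, but it takes a genuinely different route from the paper's. The paper argues by induction on $n$: compressing the subspace of matrices of $V$ with zero last row into $\Mat_{n-1}(\K)$ and applying the induction hypothesis forces $V$ to contain an elementary matrix $E_{i,n}$; conjugating by permutation matrices then yields an elementary matrix $E_{f(k),k}$ for every column index $k$, and a cycle of the map $f$ produces a sum of elementary matrices in $V$ with eigenvalue $1$. You avoid the induction on $n$ altogether: you keep the full rank-one matrices $M_i=e_iv_i^T$ (rather than forcing them down to elementary matrices), assemble their rows into $A$, and detect a non-zero eigenvalue of some $DA=\sum_i\lambda_iM_i\in V$ via the multilinear polynomial $\det(I_n-DA)$, whose non-constancy you certify with a minimal (hence chordless) cycle of the support digraph. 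Both endgames are cycle arguments, but they live in different graphs: the paper's functional digraph $k\mapsto f(k)$ of genuine elementary matrices versus your support digraph of the vectors $v_i$. What the paper's induction buys is brevity --- once one has actual elementary matrices, the eigenvector of the cycle sum is written down explicitly and no determinant identity or polynomial lemma is needed. What your approach buys is self-containedness: it needs no recursive reduction, and your two extra ingredients (a non-constant multilinear polynomial has a zero over any field, and a minimal directed cycle is chordless, so the corresponding principal minor reduces to a single Leibniz term) are elementary and correctly handle finite fields, where a Zariski-density shortcut would fail. Both proofs are valid over an arbitrary field; all the steps of yours check out, including the observation that $(v_i)_i=0$ and the identity $\det(I_n-DA)=\sum_S(-1)^{|S|}\bigl(\prod_{i\in S}\lambda_i\bigr)\det(A[S])$.
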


\begin{proof}
We prove this by induction on $n$. Assume the claim holds for any subspace of $(n-1)\times (n-1)$ matrices satisfying the assumptions,
and that it fails for $V$.
Denote by $W$ the linear subspace of $V$ consisting of its matrices with a zero last row.
We decompose every $M \in W$ as
$M=\begin{bmatrix}
K(M) & ? \\
0 & 0
\end{bmatrix}$. Notice that $K(W)$ is a linear subspace of $\Mat_{n-1}(\K)$ satisfying the assumptions of Proposition \ref{combin}.
By the induction hypothesis, there is an integer $i \in \lcro 1,n-1\rcro$ such that $R_i(K(W))=\{0\}$. However, $R_i(V) \neq \{0\}$,
hence $V$ contains the elementary matrix $E_{i,n}$ (i.e.\ the one with entry $1$ at the spot $(i,n)$, and for which all the other entries are zero).
Conjugating $V$ with a permutation matrix, this generalizes as follows: for every $k \in \lcro 1,n\rcro$, there is an integer $f(k) \in \lcro 1,n\rcro$
such that $E_{f(k),k} \in V$. We may then find an $f$-cycle, i.e.\ a list $(i_1,\dots,i_p)$ of pairwise distinct integers such that
$f(i_1)=i_2$, $f(i_2)=i_3$, \ldots, $f(i_{p-1})=i_p$ and $f(i_p)=i_1$. Hence $V$ contains the matrix
$M:=E_{i_1,i_p}+\sum_{k=1}^{p-1} E_{i_{k+1},i_k}$. However $1 \in \Sp(M)$
(consider the vector with entry $1$ in every $i_k$ row, and zero elsewhere), contradicting our assumptions.
\end{proof}

\begin{proof}[Proof of Theorem \ref{Gersten}]
Again, we use an induction process. The result is trivial when $n=0$ or $n=1$.
Assume $n \geq 2$ and the results holds for subspaces of $\Mat_{n-1}(\K)$.
Let $V \subset \Mat_n(\K)$ be as in Theorem \ref{Gersten}.
Using Proposition \ref{combin}, we lose no generality assuming that $R_n(V)=\{0\}$ (we may
reduce the situation to this one by conjugating $V$ with a permutation matrix).
Consider the linear subspace $W$ of $V$ consisting of its matrices which have the form
$$M=\begin{bmatrix}
A(M) & 0 \\
L(M) & \alpha(M)
\end{bmatrix}$$
where $A(M) \in \Mat_{n-1}(\K)$, $L(M) \in  \Mat_{1,n-1}(\K)$ and $\alpha(M) \in \K$. \\
Then the rank theorem shows that $\dim V \leq (n-1)+\dim W$. \\
For every $M \in W$, one has $\Sp(M) \subset \{0\}$ hence $\alpha(M)=0$ and $\Sp A(M) \subset \{0\}$.
Since $R_n(V)=\{0\}$, this yields $\dim A(W)=\dim W$, whilst the induction hypothesis shows that
$\dim A(W) \leq \binom{n-1}{2}\cdot$
We conclude that
$$\dim V \leq (n-1)+\binom{n-1}{2}=\binom{n}{2}\cdot$$
\end{proof}

\begin{Rem}
Proceeding by induction and using Proposition \ref{combin}, it can even be proven that under the assumptions of Theorem
\ref{Gersten}, there is a permutation matrix $P \in \GL_n(\K)$ such that $(P\,V\,P^{-1})\cap T_n^-(\K)=\{0\}$,
where $T_n^-(\K)$ denotes the space of lower triangular matrices in $\Mat_n(\K)$. This would immediately yield Theorem \ref{Gersten}.
\end{Rem}

\noindent This completes the proof of Theorem \ref{condsuff}.

\end{document}